\documentclass{article}
\usepackage{epsfig}
\usepackage{amsmath, amssymb, amsthm}	% Mathematik library (hat schon: proof)
\usepackage{color}			% Text auch in Bunt erhältlich

\setcounter{secnumdepth}{5}
\setcounter{tocdepth}{7}

\newcommand{\R}{{I\!\!R}}

\def\R{{\rm I}\! {\rm R}}

\newtheorem{theorem}{Theorem}[section]

\newtheorem{remark}{Remark}[section]

\newtheorem{definition}{Definition}[section]

\newcommand{\OT}{\mathcal O}

\begin{document}

\pagestyle{headings}

\title{Iterative Splitting Methods: Almost Asymptotic Symplectic Integrator for Stochastic Nonlinear Schr\"odinger Equation}
\author{J\"urgen Geiser
\thanks{University of Greifswald, Institute of Physics, Felix-Hausdorff-Str. 6, D-17489 Greifswald, Germany, E-mail: jgeiser@uni-greifswald.de}}
\maketitle

\begin{abstract}

In this paper we present splitting methods which
are based on iterative schemes and
applied to stochastic nonlinear Schr\"odinger equation.
We will design stochastic integrators
which almost conserve the symplectic structure.

The idea is based on rewriting an iterative splitting
approach as a successive approximation method based on 
a contraction mapping principle and that we have an almost
symplectic scheme, see \cite{tan2005} and  \cite{jiang2013}.

We apply a stochastic differential equation, that we can 
decouple into a deterministic and stochatic part, while
each part can be solved analytically.
Such decompositions allow accelerating the methods and preserving, under
suitable conditions, the symplecticity of the schemes.

A numerical analysis and application to the stochastic Schr\"odunger equation are presented.

\end{abstract}

{\bf Keywords}: splitting methods, stochastic differential equations, iterative splitting schemes, stochastic Schr\"odinger equation.\\

{\bf AMS subject classifications.} 35K25, 35K20, 74S10, 70G65.

%%%%%%%%%%%%%%%%%%%%%%%%%%%%%%%%%%%%%%%%%%%%%%%%%%%%%%%%%
%%%%%%%%%%%%%%%%%%%%%%%%%%%%%%%%%%%%%%%%%%%%%%%%%%%%%%%%%

\section{Introduction}

The motivation is to develop fast solver schemes to
solve stochastic Hamiltonians in solitary waves and collisions.

The idea is based on almost asymptotic symplecticity for
stochastic Hamiltonian partial differential equations,
such underlying algorithms are applied to develop 
stochastic symplectic methods 
for solving a stochastic Schroedinger equations, see \cite{tan2005}. 

It is shown that the noval schemes preserve the symplectic structure
in an asymptotic regime, which means it is  $\OT(\delta^{n+1})$ away from
a symplectic scheme with $\delta \in (0,1)$. 

\begin{definition}
We consider a Hamiltonian system, while $u = (p, q)$ and we write:
\begin{eqnarray}
\label{sympl_1}
\frac{\partial u}{\partial t} = f(u(t)) = J \nabla_u H(u(t)) ,
\end{eqnarray}
where $(p, q) \in \R^d \times \R^d$ and $J = \left( \begin{array}{c c} 0 & - I_d \\ I_d & 0 \end{array} \right)$ and $I_d$ is the d-dimensional identity matrix,
$\nabla_u$ is the gradient with respect to $u$.

We assume that $\phi_{\tau}$ is the solution operator with $u(t^{n+1}) = \phi\tau(u^n)$, where $\tau$ isthe time step and we have the following definition about the symplecticity:
\begin{itemize}
\item $\phi_{\tau}$ preserves the symplecticness of the system (\ref{sympl_1}), if:
\begin{eqnarray}
\label{sympl_2}
(\frac{\partial \phi_{\tau}}{\partial z(t)}^T|_{t=t^n} J(\frac{\partial \phi_{\tau}}{\partial z(t)}|_{t=t^n} - J = 0,
\end{eqnarray}
\item $\phi_{\tau}$ preserves the almost (or asymptotic) symplecticness of the system (\ref{sympl_1}), if:
\begin{eqnarray}
\label{sympl_3}
|| (\frac{\partial \phi_{\tau}}{\partial z(t)}^T|_{t=t^n} J(\frac{\partial \phi_{\tau}}{\partial z(t)}|_{t=t^n} - J || \le C \delta^{m+1},
\end{eqnarray}
where $C$ is a constant with $\tau = \tilde{\tau}(\delta)$ and $\tilde{\tau}$ is a function of $\delta$, which is given from the solution method. 
\end{itemize}

\end{definition}

\begin{remark}
The idea of almost symplecticity has the origin of 
modifying the definition of symplecticity.
For example, if one assume that $J$ depends on $u$, then
one can proof, that we have an almost poisson structure
and we preserve the poisson structure up to the second order,
see \cite{austin1993}.
Such ideas are also used in the development in pseudo-symplectic methods, see \cite{aubry1998}.
\end{remark}

In the following, we deal with the stochastic nonlinear Schr\"odinger equation with multiplicative noise, which is given by
\begin{eqnarray}
\label{orig_1}
i \frac{\partial u}{\partial t} = \frac{\partial^2 u}{\partial x^2} + 2 |u|^{2 \sigma} u + \epsilon u \circ \frac{d W}{d t}, \; t > 0, \epsilon > 0, x \in \R,
\end{eqnarray}
where $u = u(x,t)$ is the complex-valued solution and $\circ$ denotes 
$\frac{d W}{dt}$ is defined as a real-valued white noise which is delta correlated in time and either smooth or delta correlated in space.

The deterministic nonlinear Schroedinger equation is given by
\begin{eqnarray}
i \frac{\partial u}{\partial t} = \frac{\partial^2 u}{\partial x^2} + 2|u|^{2 \sigma} u,
\end{eqnarray}
which is well-known in the literature \cite{schober1999}.

\section[Iterative Splitting]{Iterative Splitting as a Successive Approximation Method}

We can rewrite this to a Hamiltonian system by 

$u = p + i q$, where $p$ and $q$ are real-valued functions and we can separate it into the following form and we obtain a multi-symplectic system:
\begin{eqnarray}
&& \left(
\begin{array}{c}
d P \\
d Q
\end{array} \right) = 
\left(
\begin{array}{c}
- H_q(P, Q) dt - G_q(P, Q) \circ dW(t) \\
 H_p(P, Q) dt + G_p(P, Q) \circ dW(t)
\end{array}
\right)
\end{eqnarray}
where we have the symplectic structure $dP \wedge dQ = dp \wedge dq$.

The system is given by
\begin{eqnarray}
 \left(
\begin{array}{c}
d p  \\
d q
\end{array} \right) && = 
\left(
\begin{array}{c c}
0 &  A_1 + A_2(p, q) \\
- A_1 + A_2(p, q) & 0 
\end{array}
\right)
\left(
\begin{array}{c}
p \\
q
\end{array} \right) dt \nonumber \\
&&
+ \epsilon
\left(
\begin{array}{c c}
0 &  I \\
- I & 0 
\end{array}
\right)
\left(
\begin{array}{c}
p \\
q
\end{array} \right)  \circ dW ,
\end{eqnarray}
where the matrices are given by the semi-discretization of the original system
(\ref{orig_1}).

\begin{theorem}
The iterative splitting scheme is almost symplectic.
\end{theorem}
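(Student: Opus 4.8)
The plan is to realize the iterative splitting scheme as a successive approximation generated by a contraction mapping, and then estimate how far the resulting one-step map is from a genuine symplectic flow. First I would fix the splitting $f = f_1 + f_2$ corresponding to the linear part $A_1$ (with the free Schr\"odinger operator) and the remaining nonlinear-plus-noise part $A_2(p,q)$ together with the multiplicative $\epsilon$-term, and write the iteration in the standard fixed-point form: starting from $u^{(0)}$ one solves alternately the subproblems over $[t^n,t^{n+1}]$, obtaining iterates $u^{(1)},\dots,u^{(m)}$. Each subproblem is to be solved analytically — the linear part by the exact (symplectic) propagator $e^{\tau J \nabla^2}$-type flow, the stochastic/nonlinear part by its exact solution, which for the noise term is a rotation $e^{\epsilon J \,\Delta W}$ and hence also symplectic. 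The key observation is that the exact substep flows $\phi^{[1]}_\tau, \phi^{[2]}_\tau$ are each symplectic, so any \emph{fixed point} of the iteration would be the exact composed flow and the error comes solely from truncating at a finite number $m$ of iterations.

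Next I would set up the contraction estimate. Writing the iteration operator $\mathcal{K}$ on the space of trajectories on $[t^n,t^{n+1}]$ with the sup norm, a Gronwall/Lipschitz argument (using that $A_2$ is locally Lipschitz in $(p,q)$ and that the stochastic integrals are controlled pathwise because $dW$ enters in Stratonovich form multiplicatively) gives $\|\mathcal{K}u - \mathcal{K}v\| \le L\tau \,\|u-v\|$, so with $\delta := L\tau \in (0,1)$ for $\tau$ small we get a contraction with factor $\delta$. Banach's theorem then yields convergence of $u^{(m)}$ to the true composed flow $\phi_\tau$ at geometric rate, $\|u^{(m)} - \phi_\tau\| \le C\,\delta^{m+1}$, and — crucially — the same bound propagates to the Jacobians $\partial u^{(m)}/\partial u^n$ by differentiating the iteration (the variational equations satisfy the same type of contraction). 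This is the step where $\tau = \tilde\tau(\delta)$ of Definition 1 is made explicit: one sets $\tilde\tau(\delta) = \delta/L$.

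Then I would assemble the almost-symplecticity estimate. Since $\phi_\tau$ is exactly symplectic (composition of symplectic substep flows), we have $(\partial\phi_\tau/\partial u^n)^T J (\partial\phi_\tau/\partial u^n) - J = 0$. Writing $\Psi_m := \partial u^{(m)}/\partial u^n$ and $\Phi := \partial\phi_\tau/\partial u^n$ with $\|\Psi_m - \Phi\| \le C\delta^{m+1}$, a direct expansion
\begin{eqnarray}
\Psi_m^T J \Psi_m - J = \Psi_m^T J \Psi_m - \Phi^T J \Phi = (\Psi_m - \Phi)^T J \Psi_m + \Phi^T J (\Psi_m - \Phi)
\end{eqnarray}
and the triangle inequality together with uniform bounds on $\|\Psi_m\|,\|\Phi\|$ (again from Gronwall) give $\|\Psi_m^T J \Psi_m - J\| \le \tilde C \,\delta^{m+1}$, which is exactly (\ref{sympl_3}). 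I would close by noting the scheme is $\OT(\delta^{m+1})$ away from symplectic, matching the claim in the introduction.

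The main obstacle I anticipate is the rigorous control of the variational (Jacobian) equation in the stochastic setting: differentiating the Stratonovich iteration with respect to the initial data $u^n$ requires that the flow be differentiable in initial conditions and that these derivatives inherit the contraction estimate uniformly in $\omega$ (pathwise) — or, if one argues in $L^2(\Omega)$, that the stochastic integrals of the derivative process are bounded via It\^o isometry after Stratonovich-to-It\^o conversion. Making the constant $C$ genuinely independent of the realization of $W$ (so that (\ref{sympl_3}) holds in the stated deterministic-looking norm) is delicate and may need either a boundedness assumption on the noise coefficient (here it is the bounded rotation $\epsilon J$, which helps) or a localization/stopping-time argument. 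The nonlinear term $2|u|^{2\sigma}u$ is only locally Lipschitz, so a secondary technical point is an a priori bound keeping the iterates in a fixed ball on $[t^n,t^{n+1}]$; this is routine for $\tau$ small but must be stated.
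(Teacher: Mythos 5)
Your overall route---realize the scheme as a successive approximation, obtain a contraction with factor $\delta$, and measure the distance of the truncated iterate's Jacobian from that of the exact symplectic flow---is the route the paper intends, and you actually carry it further than the paper's own proof does: the proof printed under this theorem only writes down the linearised iteration and the Lipschitz bound $\tilde{\rho}=\Delta t\,\|\tilde{F}\|+\Delta W\,\|\tilde{G}\|$ and stops, deferring the convergence rate to Theorem \ref{Th1} and the Jacobian estimate (\ref{eq_sympl}) to Theorem \ref{Th2}. Your identity $\Psi_m^T J \Psi_m - \Phi^T J \Phi = (\Psi_m-\Phi)^T J \Psi_m + \Phi^T J(\Psi_m-\Phi)$, combined with uniform bounds on $\|\Psi_m\|$ and $\|\Phi\|$, is precisely the missing step that converts $\OT(\delta^{m+1})$ convergence of the Jacobians into the bound (\ref{sympl_3}); the paper never writes it out.

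Two points in your argument need repair. First, the fixed point of the iteration (\ref{kap3_iter_1}) is \emph{not} the composition $\phi^{[2]}_\tau\circ\phi^{[1]}_\tau$ of the substep flows: iterating $dy_i = A y_i\,dt + B y_{i-1}\,dW_t$ to convergence yields the exact flow of the full coupled system $dy = Ay\,dt + By\,dW_t$, not a Lie--Trotter product. Your stated justification for the symplecticity of the limit (``composition of symplectic substeps'') therefore does not apply to this scheme. The argument survives because the exact Stratonovich flow of the full stochastic Hamiltonian system is itself symplectic almost surely, so the intermediate claim is true for a different reason; you should invoke that fact instead. Second, your contraction factor $\delta = L\tau$ misses the dominant stochastic contribution: the Brownian increment scales like $\sqrt{\tau}$ (exactly in mean square, and up to logarithmic corrections pathwise), so one sweep of the iteration contracts by $\OT(\sqrt{\tau})$, not $\OT(\tau)$. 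This is exactly why the paper's Theorem \ref{Th1} obtains order $\OT(\sqrt{\Delta t}^{\,m+1})$ and why Theorem \ref{Th2} imposes $\sqrt{\tau}\le \delta/K_1$ rather than $\tau\le\delta/L$; accordingly your $\tilde{\tau}(\delta)$ should be $(\delta/L)^2$. Your closing caveats---pathwise versus mean-square control of the variational equation, and the merely local Lipschitz continuity of $|u|^{2\sigma}u$ requiring an a priori ball for the iterates---are well placed and are indeed passed over in silence by the paper.
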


\begin{proof}

For the Hamiltonian system
\begin{eqnarray}
d y = F(y) dt + G(y) dW
\end{eqnarray}
we apply the successive approximation method:
\begin{eqnarray}
y^{n+1, i+1} = K(y^n, y^{n+1,i}, y^{n+1,i}) = y^n + F(y^{n+1, i+1}) \Delta t + G(y^{n+1, i}) \Delta W , 
\end{eqnarray}
where we apply the linearised scheme:
\begin{eqnarray}
y^{n+1, i+1} = \tilde{K}(y^n, y^{n+1,i+1}, y^{n+1,i}) = y^n + \tilde{F} y^{n+1, i+1} \Delta t + \tilde{G} y^{n+1, i} \Delta W , 
\end{eqnarray}
further, the contraction mapping is given by
\begin{eqnarray}
&& ||  \tilde{K}(y^n, y^{n+1,i+1}, y^{n+1,i}) - \tilde{K}(x^n, x^{n+1,i+1}, x^{n+1,i}) || \nonumber \\
&& \le  \tilde{\rho} || y^{n+1, i+1} - x^{n+1, i+1} || , 
\end{eqnarray}
where $\tilde{\rho} = \rho_1 + \rho_2$ and
$\rho_1 = \Delta t ||\tilde{F}||$ and $\rho_2 = \Delta W ||\tilde{G}||$.
\end{proof}

\section{Almost Symplectic Scheme}
\label{symp}

In the following, we discuss the linearised equation
in the algorithm.

We have the fixed-splitting discretisation step-size $\tau$, on the
time-interval $[t^n,t^{n+1}]$, and the stochastic time step $\Delta W = W_{t^{n+1}} - W_{t^n} = \Delta t X$ (Wiener process), where $X$ is a Gaussian distributed  
random variable with $E(X)=0$ and $Var(X) = 1$, see \cite{kloeden1992}.

We solve the following sub-problems
consecutively for $i=1,2, \dots ,m+1$. (cf. \cite{gei_2009_5}):
\begin{eqnarray}
 && d y_i(t) = A y_i(t) dt \; + \; B y_{i-1} dW_t(t), \;
\mbox{with} \; \; y_i(t^n) = y^{n} \label{kap3_iter_1} \\
&& \mbox{and} \; y_{0}(t^n) = y^n \; , \; y_{-1} = 0.0  \nonumber
\end{eqnarray}
 where $y^n$ is the known successive approximation at the 
time-level $t=t^{n}$. The split approximation at the time-level $t=t^{n+1}$ 
is defined by $y^{n+1}=y_{m+1}(t^{n+1})$.

We can rewrite this into the following ODE form:
\begin{eqnarray}
 && \frac{\partial y_i(t)}{\partial t} =  A y_i(t) \; + \; B y_{i-1} \dot{W}_t , \;
\mbox{with} \; \; y_i(t^n) = y^{n} \label{kap3_iter_1} \\
&& \mbox{and} \; y_{0}(t^n) = y^n \; , \; y_{-1} = 0.0 , \nonumber
\end{eqnarray}
where $\dot{W}_t = \frac{d W_t}{d t}$.

\smallskip
\begin{theorem}{\label{Th1}}
We are given $A,B \in {\mathcal L(\Omega)} $ linear bounded operators
(e.g., due to the linearisation) and we consider the abstract Cauchy problem
\begin{equation}\label{eq:ACP}
\begin{array}{c}
 d y(t) = A y dt  + B y dW_t, \quad 0 < t \leq T \\
\noalign{\vskip 1ex} {\displaystyle y(0)=y_0. }
\end{array}
\end{equation}
\noindent
 Then the  problem (\ref{eq:ACP}) has a unique solution;  the iterations
(\ref{kap3_iter_1}) over  $i=1,2, \dots, m+1$ are convergent with order ${\mathcal O} (\sqrt{\Delta t}^{m+1})$.
\end{theorem}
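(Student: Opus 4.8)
The plan is to set up the iteration \eqref{kap3_iter_1} as a fixed-point recursion in a suitable Banach space and estimate the propagation of the error $e_i(t) = y(t) - y_i(t)$, where $y$ is the exact solution of \eqref{eq:ACP} on $[t^n,t^{n+1}]$. First I would record the mild (variation-of-constants) representation of both the exact solution and the iterates: since $A$ generates a (uniformly continuous) semigroup $e^{tA}$ because $A$ is bounded, one has $y(t) = e^{(t-t^n)A}y^n + \int_{t^n}^{t} e^{(t-s)A} B y(s)\, dW_s$ and, correspondingly, $y_i(t) = e^{(t-t^n)A}y^n + \int_{t^n}^{t} e^{(t-s)A} B y_{i-1}(s)\, dW_s$. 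Subtracting gives the clean recursion $e_i(t) = \int_{t^n}^{t} e^{(t-s)A} B\, e_{i-1}(s)\, dW_s$, which is the engine of the whole argument.

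Next I would bound the stochastic convolution. Taking $L^2(\Omega)$ norms and applying the Itô isometry (the Stratonovich term can be converted to Itô at the price of a bounded drift correction, using boundedness of $A,B$, which only affects constants), one gets $\mathbb{E}\,\|e_i(t)\|^2 \le \|B\|^2 M^2 \int_{t^n}^{t} \mathbb{E}\,\|e_{i-1}(s)\|^2\, ds$ with $M = \sup_{0\le r\le \Delta t}\|e^{rA}\|$. Iterating this inequality $m+1$ times yields $\sup_{t\in[t^n,t^{n+1}]} \mathbb{E}\,\|e_{m+1}(t)\|^2 \le \frac{(\|B\|^2 M^2 \Delta t)^{m+1}}{(m+1)!}\, \sup_t \mathbb{E}\,\|e_0(t)\|^2$, and since $e_0(t) = y(t) - e^{(t-t^n)A}y^n$ is $\mathcal{O}(\sqrt{\Delta t})$ in $L^2(\Omega)$, the bound in the theorem statement, $\mathcal{O}(\sqrt{\Delta t}^{\,m+1})$, follows after taking square roots. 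Uniqueness of the solution to \eqref{eq:ACP} is standard and follows from the same Itô-isometry contraction estimate via Grönwall (or directly from $A,B$ bounded), so I would dispatch it in one line before the convergence estimate.

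The main obstacle I anticipate is handling the stochastic term rigorously rather than formally: the recursion as written in \eqref{kap3_iter_1} uses $B y_{i-1}\, dW_t$ with $y_{i-1}$ evaluated over the whole interval, and one must be careful that $y_{i-1}$ is adapted (which it is, being built by stochastic integration against the same Brownian motion) so that the Itô integral and its isometry are legitimate. A secondary subtlety is the Stratonovich-to-Itô conversion in the $G(y)\circ dW$ formulation from earlier in the paper: since $G$ here is linear with bounded coefficient, the correction term is $-\tfrac12 B^2 y\, dt$, a bounded perturbation of the drift, so it changes none of the orders and only enters the constants — but this should be stated. I would also note the telescoping/combinatorial point that each application of the recursion gains one factor of $\sqrt{\Delta t}$ in the $L^2(\Omega)$ norm (half an order), which is exactly why $m+1$ iterations give order $(\sqrt{\Delta t})^{m+1}$ rather than $(\Delta t)^{m+1}$; getting the bookkeeping of this half-order right, together with the factorial denominator that guarantees convergence as $m\to\infty$ for fixed $\Delta t$, is the part that most needs care.
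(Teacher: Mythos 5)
Your proposal follows the same skeleton as the paper's proof: write the exact solution and the iterates in mild (variation-of-constants) form, subtract to obtain the error recursion $e_i(t) = \int_{t^n}^{t} e^{(t-s)A} B\, e_{i-1}(s)\, dW_s$, and observe that each pass through this recursion gains one factor of $\sqrt{\Delta t}$, so that $m+1$ iterations give order $(\sqrt{\Delta t})^{m+1}$. Where you genuinely diverge is in how that half-order gain is justified. The paper estimates pathwise, writing
\[
\| e_i\|(t) \le \|B\|\,\|e_{i-1}\| \int_{t^n}^{t} \|\exp(A(t-s))\|\, dW_s \le K \|B\| \sqrt{\Delta t}\, \|e_{i-1}\|,
\]
which treats $\int dW_s \approx \sqrt{\Delta t}$ as a formal substitution; a stochastic integral is not bounded pathwise by $\sqrt{\Delta t}$, so this step is heuristic. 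You instead work in $L^2(\Omega)$ and invoke the It\^o isometry, getting $\mathbb{E}\|e_i(t)\|^2 \le \|B\|^2 M^2 \int_{t^n}^{t} \mathbb{E}\|e_{i-1}(s)\|^2\, ds$ and hence the factorial constants $(\|B\|^2M^2\Delta t)^{m+1}/(m+1)!$ after iterating. This buys a rigorous mean-square convergence statement (the natural sense in which the claimed order $\mathcal{O}(\sqrt{\Delta t}^{\,m+1})$ holds), plus convergence of the iteration as $m \to \infty$ for fixed $\Delta t$ without any smallness condition on $\Delta t$, which the paper's geometric bound $K\|B\|\sqrt{\Delta t} < 1$ would implicitly require. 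You also handle two points the paper passes over silently: the Stratonovich-to-It\^o correction $-\tfrac12 B^2 y\, dt$ (harmless here since $B$ is bounded, but worth stating given the $\circ\, dW$ formulation earlier in the paper) and the adaptedness of $y_{i-1}$ needed to apply the isometry. One small bookkeeping remark: the paper's final display carries the factor $\|e_0\|$ explicitly, whereas you additionally note $e_0 = \mathcal{O}(\sqrt{\Delta t})$; both give at least the claimed order, but you should state which convention you are using so the exponent $m+1$ is unambiguous.
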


\begin{proof}

The problem (\ref{eq:ACP}) has a unique solution $c(t) = \exp((A \; \Delta t + B \; \Delta W) \; c_0 $.

For the local error function $e_i(t)= y(t) - y_i(t)$, we have the relations
\begin{equation}
\begin{array}{c}
d e_i(t) = A e_i dt  + B e_{i-1} dW_t, \quad t \in (t^n,t^{n+1}], \\
\noalign{\vskip 1ex} {\displaystyle e_i(t^n)=0}.
\end{array} \label{eq:err1}
\end{equation}
Applying the method of variation of constants, 
the solution of the abstract Cauchy problem can be written as
\begin{equation}
\begin{array}{c}
{\displaystyle e_i(t) = \int_{t^n}^{t}{\exp (A (t-s)) B e_i(s)dW_s}, \quad t \in [t^n, t^{n+1}].}
\end{array} \label{eq:errsol}
\end{equation}
Furthermore, we have
\begin{equation}\label{eq:estim}
\begin{array}{c}
\| e_i\|(t) \le  \| B \|
\|{ e}_{i-1}  \|\int_{t^n}^{t}\|{\exp (A(t-s))\| dW_s},
\quad t \in [t^n, t^{n+1}].
\end{array}
\end{equation}
Based on our assumption that $A$ is bounded, we have
\begin{equation}
\begin{array}{c}
 \| e_i\|(t) \le  K \| B \| \sqrt{\Delta t} \| e_{i-1}\| , \quad t \in [t^n,t^{n+1}] .
\end{array} \label{eq:err3}
\end{equation}
where $|| \exp(A t) || \le K, \; t > 0 $.

The estimations (\ref{eq:err3}) result in
\begin{equation}
\begin{array}{c}
\| e_{m+1}\| = K_1 \sqrt{\Delta t}^{m+1} \|{e}_{0}\| + {\mathcal O }(\sqrt{\Delta t}^{m+2}),
\end{array} \label{eq:basic3}
\end{equation}
which proves our statement.
\end{proof}

Furthermore, the almost asymptotic symplecticity of the scheme (\ref{kap3_iter_1}) 
is given as:

\smallskip
\begin{theorem}{\label{Th2}}

Consider the algorithm (\ref{kap3_iter_1}) and let $\phi_{\Delta t}^i$ be the 
solver step of the algorithm.
Then for any $\delta \in (0, 1)$, there exists $\sqrt{\tau} \le \frac{\delta}{K_1}$, where $K_1 = || B ||^{m+1}$ and the time-step $\Delta t \le \tau$, where $m$ is the number of iterative steps,
and we have
\begin{equation}\label{eq_sympl}
\begin{array}{c}
(\frac{\partial \phi_{\Delta t}^i}{\partial y_{0}})^t J (\frac{\partial \phi_{\Delta t}^i}{\partial y_{0}}) - J || \le C \delta^{m+1} , \forall y_{0} \in \Omega ,\\
\end{array}
\end{equation}
where $C$ is a constant.

\end{theorem}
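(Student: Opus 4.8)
The plan is to combine the convergence estimate of Theorem \ref{Th1} with the fact that the exact stochastic flow $\phi_{\Delta t}$ of the linear Hamiltonian system \eqref{eq:ACP} is symplectic (in the Stratonovich sense), so that the iterative solver $\phi_{\Delta t}^i$ differs from a genuinely symplectic map by an amount that is controlled by the iteration error. First I would record that, because $A$ and $B$ generate the Hamiltonian/symplectic structure displayed before the statement (the drift and diffusion matrices both have the block form $J^{-1}\cdot(\text{symmetric})$), the exact solution operator $\phi_{\Delta t}$ satisfies $(\partial_{y_0}\phi_{\Delta t})^t J (\partial_{y_0}\phi_{\Delta t}) - J = 0$ pointwise in $\Omega$; this is the analogue of \eqref{sympl_2} and is standard for linear Stratonovich systems with $\circ$-noise.

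Next I would estimate the discrepancy between the Jacobian of the iterative step $\phi_{\Delta t}^i$ and that of the exact flow $\phi_{\Delta t}$. By Theorem \ref{Th1}, with $e_i = \phi_{\Delta t} - \phi_{\Delta t}^i$ (the iteration regarded as a map of the initial datum $y_0$), we have the bound $\| e_{m+1}\| \le K_1 \sqrt{\Delta t}^{\,m+1}\| e_0\| + \OT(\sqrt{\Delta t}^{\,m+2})$ with $K_1 = \|B\|^{m+1}$, and since $A,B$ are linear bounded operators the same estimate propagates to the Fréchet derivative $\partial_{y_0}\phi_{\Delta t}^i$, i.e. $\|\partial_{y_0}\phi_{\Delta t}^i - \partial_{y_0}\phi_{\Delta t}\| \le K_1 \sqrt{\Delta t}^{\,m+1} + \OT(\sqrt{\Delta t}^{\,m+2})$. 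Writing $\Psi := \partial_{y_0}\phi_{\Delta t}^i$ and $\Phi := \partial_{y_0}\phi_{\Delta t}$, I would expand
\begin{equation}\label{eq:splitting_Psi}
\Psi^t J \Psi - J = (\Psi - \Phi)^t J \Psi + \Phi^t J (\Psi - \Phi) + (\Phi^t J \Phi - J),
\end{equation}
and use that the last term vanishes by symplecticity of the exact flow, while the first two terms are bounded by $(\|\Psi\| + \|\Phi\|)\,\|J\|\,\|\Psi - \Phi\|$. Since $\|\Phi\|$ and $\|\Psi\|$ are bounded uniformly for $\Delta t \le \tau$ (again because $A,B$ are bounded, so $\|\exp(A\Delta t + B\Delta W)\|\le K$), this yields $\|\Psi^t J \Psi - J\| \le C' K_1 \sqrt{\Delta t}^{\,m+1}$ for a constant $C'$ depending only on $K$ and $\|J\|=1$.

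Finally I would impose the stated step-size restriction: choosing $\sqrt{\tau} \le \delta/K_1$ and $\Delta t \le \tau$ gives $K_1 \sqrt{\Delta t}^{\,m+1} \le K_1 \sqrt{\tau}^{\,m+1} \le \delta^{m+1}/K_1^{m}\le \delta^{m+1}$ for $K_1 \ge 1$, so that $\|\Psi^t J \Psi - J\| \le C \delta^{m+1}$ with $C = C'$, uniformly in $y_0 \in \Omega$, which is exactly \eqref{eq_sympl}. The main obstacle I anticipate is the step in which the $L^p$- (or mean-square) convergence estimate of Theorem \ref{Th1} is transferred to the Jacobian map: one must either differentiate the variation-of-constants representation \eqref{eq:errsol} with respect to $y_0$ and re-run the Gronwall-type estimate for $\partial_{y_0} e_i$, or invoke linearity to identify $\partial_{y_0}\phi_{\Delta t}^i$ with the solver applied to the matrix-valued variational equation, for which Theorem \ref{Th1} applies verbatim; I would take the latter route, since $A$ and $B$ act linearly and the variational equation has exactly the same form as \eqref{eq:ACP}. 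A secondary point requiring care is specifying in which stochastic norm \eqref{eq_sympl} is meant to hold (almost surely, or in expectation), and ensuring the constant $C$ does not hide a dependence on $\Delta W$; restricting to the Stratonovich setting with $\Delta W = \Delta t\, X$ and absorbing moments of $X$ into $C$ resolves this.
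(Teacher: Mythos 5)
Your proposal is correct in substance, but it takes a genuinely different route from the paper's own proof. The paper never brings in the exact flow: it writes the explicit solution formula for the iterate, $\phi_{\Delta t}^i = \exp(A\,\Delta t)\,y_n + \int_{t^n}^{t^{n+1}}\exp(A(t^{n+1}-s))\,B\,y_{i-1}(s)\,dW_s$, bounds the derivative contributed by the stochastic correction at each sweep by $C\|B\|\sqrt{\Delta t}$, runs the recursion directly on these Jacobians to obtain $\|\partial \phi_{\Delta t}^i/\partial y_0\|\le \tilde C\,\sqrt{\Delta t}^{\,i+1}$ for the accumulated correction term, and then asserts the symplecticity defect bound together with the step-size restriction $\sqrt{\tau}\le \delta/K_1$. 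You instead compare $\Psi=\partial_{y_0}\phi_{\Delta t}^i$ with the Jacobian $\Phi$ of the exact Stratonovich flow, use that the exact flow is symplectic because both drift and diffusion matrices are infinitesimally symplectic, transfer the rate of Theorem~\ref{Th1} to the Jacobian by linearity, and conclude via the identity $\Psi^t J\Psi - J = (\Psi-\Phi)^t J\Psi + \Phi^t J(\Psi-\Phi) + (\Phi^t J\Phi - J)$. Your organisation is the more transparent one: it isolates exactly where the $\delta^{m+1}$ comes from and makes explicit the structural ingredient (exact symplecticity of the linearised Stratonovich flow) that the paper leaves implicit; the price is that you must justify that ingredient and the passage from the solution estimate to the Fr\'echet derivative, both of which you correctly flag and which are immediate here since the linearised system is linear in $y_0$. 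The points you raise at the end --- that the constant must absorb a factor $K_1^{-m}$ when $\|B\|<1$, and that one must fix the stochastic sense of the bound (pathwise with $\Delta W=\sqrt{\Delta t}\,X$ versus in expectation) --- are real issues that the paper's own two-line conclusion glosses over entirely, so including them strengthens rather than weakens the argument.
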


\begin{proof}

The algorithm (\ref{kap3_iter_1}) has the following 
solution:
\begin{equation}
\begin{array}{c}
\phi_{\Delta t}^i = \exp(A \; \Delta t) y_n + \int_{t^n}^{t^{n+1}} \exp(A (t^{n+1} - s)) \; B \; y_{i-1}(s) \; dW_s .
\end{array} \label{sol_sympl}
\end{equation}
Furthermore, we have
\begin{equation}
\begin{array}{c}
\| \frac{\partial \phi_{\Delta t}^i}{\partial y_{i}} \} \le || \exp(A (\Delta t)) \; B \; \Delta W_t || , \\
\le C || B || \; \sqrt{\Delta t} , 
\end{array}
\end{equation}
and the recursion is given by
\begin{equation}
\begin{array}{c}
\| \frac{\partial \phi_{\Delta t}^i}{\partial y_{0}} \} \le \tilde{C} \; \sqrt{\Delta t}^{i+1} , 
\end{array}
\end{equation}
when the estimations result in
\begin{equation}
\begin{array}{c}
\| (\frac{\partial \phi_{\Delta t}^m}{\partial y_{0}})^t J (\frac{\partial \phi_{\Delta t}^m}{\partial y_{0}}) - J || \le C \delta^{m+1} , \forall y_{0} \in \Omega ,\\
\end{array}
\end{equation}
and $\sqrt{\tau} \le \frac{\delta}{K_1}$, which proves our statement.

\end{proof}

\section{Numerical Methods}

In the following, we treat the different numerical methods.

The underlying equation is given as
\begin{eqnarray}
i \frac{\partial u}{\partial t} = \lambda \frac{1}{2} \frac{\partial^2 u}{\partial x^2} + 2 |u|^{2 \sigma} u + \epsilon u \circ \frac{d W}{d t}, \; t > 0, \epsilon > 0, x \in \R,
\end{eqnarray}
where the initial values are given as $u_{t_0}= u_0$, $\lambda \in \R$
and $W$ is a Wiener process.

We apply a semi-discretisation via finite difference schemes and obtain the
ODE problem
\begin{eqnarray}
i \frac{\partial u}{\partial t} = A u + B(u) u + C u \circ \frac{d W}{d t}, \; t > 0, \epsilon > 0, x \in \R,
\end{eqnarray}
where the operators are given by
\begin{eqnarray}
A = \lambda \frac{1}{\Delta x^2} [1 \; -2 \; 1] , \\
B(u) = 2 |u|^{2 \sigma}  \\
C = \epsilon u ,
\end{eqnarray}
where we apply the different splitting schemes.

\subsection{Linearised stochastic Schroedinger equation}

We consider the following linearised stochastic Schroedinger equation:
\begin{eqnarray}
&& i \frac{\partial u}{\partial t} = - \frac{1}{2} \frac{\partial^2}{\partial x^2} u + V(x,t) u + \psi |u|^2 u  \nonumber \\
&& + \epsilon u \circ \frac{dW}{dt}, (x,t) \in [0, 1] \times [0, 1] , \\
&& u(x, 0) = u_0(x) , x \in [0,1] , \\
&& u(0, t) = u(1, t) , t \in [0, 1] ,
\end{eqnarray}
where $u_0(x) = \exp(\sin(2 x))$. \\
We assume periodic boundary conditions $u(x_L, t)= u(x_R, t)$, \\
where $\Omega = [x_L , x_R]$, e.g. $x_L = 0$, $x_R = 1.0$ and $\epsilon$ is small.

We employ the following transformation and change of variables:

$u = \eta + i \xi$ \\
\begin{eqnarray}
&& \left(
\begin{array}{c}
\dot{\eta} \\
\dot{\xi}
\end{array} \right) = 
\left(
\begin{array}{c c}
0 &  A(t,x, \eta, \xi) \\
- A(t,x, \eta, \xi) & 0 
\end{array}
\right)
\left(
\begin{array}{c}
\eta \\
\xi
\end{array} \right)
\end{eqnarray}

We apply a finite difference discretisation and the matrices are
given as
\begin{eqnarray}
&& A(t,x, \eta, \xi) = A_1(t,t) + A_2(t,x, \eta, \xi) + A_3(t,x) , \\
&& A_1(t,x) = - \frac{1}{2} \frac{1}{\Delta x^2} [1 \; -2 \; 1] , \\
&& A_2(t,x, \eta, \xi) = \left( V(x) + 2 (\eta^2 + \xi^2) \right) \;  [0 \; 1 \; 0 ], \\
&& A_3(t,x) = \epsilon \Delta W  \;  [0 \; 1 \; 0 ].
\end{eqnarray}
\begin{eqnarray}
&& A_1(t,x) = - \frac{1}{2} \frac{1}{\Delta x^2} 
\left( \begin{array}{c c c c c c}
-2 & 1 & 0 & \ldots & 0 & 1  \\
1 & -2 & 1 & 0 & \ldots & 0 \\
0 & 1 & -2 & 1 & 0 & \ldots  \\
\vdots & \ddots  & \ddots & \ddots & \ddots & \vdots \\
1 & 0 & \ldots & 0 &  1 & -2 \\ 
\end{array} \right) , \\
&& A_2(t,x, \eta, \xi) = \left( \begin{array}{c c c c c c}
 \tilde{V}(x_{1})  & 0 & 0 & \ldots & 0 & 0  \\
0 &  \tilde{V}(x_{2})  & 0 & 0 & \ldots & 0  \\
0 & 0 &  \tilde{V}(x_{3})  & 0 & 0 & \ldots  \\
\vdots & \ddots  & \ddots & \ddots & \ddots & \vdots \\
0 & 0  & \ldots & \ldots & 0& \tilde{V}(x_{M}) \\
\end{array} \right) , \\
&& \tilde{V}(x_i) = V(x_i) + 2 (\eta^2(x_i, t^{n-1}) + \xi^2(x_i, t^{n-1})) , \\
&& A_3(t,x) = \left( \begin{array}{c c c c c c}
 \epsilon  & 0 & 0 & \ldots & 0 & 0  \\
0 &   \epsilon & 0 & 0 & \ldots & 0  \\
0 & 0 &   \epsilon & 0 & 0 & \ldots  \\
\vdots & \ddots  & \ddots & \ddots & \ddots & \vdots \\
0 & 0  & \ldots & \ldots & 0&  \epsilon \\
\end{array} \right) , 
\end{eqnarray}
where we have $V(x,t) = 1.0$, $\epsilon = 1$, $\Delta x = 0.1, 0.01. 0.001$.

We apply the operator splitting schemes:
\begin{eqnarray}
&& \left(
\begin{array}{c}
\eta^{n+1} \\
\xi^{n+1}
\end{array} \right) \nonumber \\
& = & \exp(\Delta t \; \tilde{A}_1) \; \exp(\Delta t \; \tilde{A}_2(\eta^{n}, \xi^{n})) \; \\
&& \cdot  \exp(- \frac{1}{2} \Delta t \; (\tilde{A}_3^t \tilde{A}_3) + \tilde{A}_3 \Delta W_t )  \left(
\begin{array}{c}
\eta^n \\
\xi^n
\end{array} \right) , 1 \le n \le N , \nonumber \\
&& \eta^0 = (\exp(\sin(2 x_1)), \ldots, \exp(\sin(2 x_M)))^t , \xi^0 = (0, \ldots , 0)^t
\end{eqnarray}
where $\Delta t = t^{n+1} - t^n$, the random variable $W_t$ is based on a Wiener process with $\Delta W_t = W_{t^{n+1}} - W_{t^n} = \sqrt{\Delta t} X$, and
$X$ is a Gaussian distributed random variable with $E(X) = 0$ and 
$Var(X) = 1$. This means we have $\Delta W_t = rand \sqrt{\Delta t}$.

The splitting operators are
\begin{eqnarray}
\tilde{A}_1 =  \left(
\begin{array}{c c}
0 &  A_1(t,x, \eta, \xi) \\
- A_1(t,x, \eta, \xi) & 0 
\end{array}
\right)  \in \R^{2m \times 2m} , \\
\tilde{A}_2 =  \left(
\begin{array}{c c}
0 &  A_2(t,x, \eta, \xi) \\
- A_2(t,x, \eta, \xi) & 0 
\end{array}
\right)  \in \R^{2m \times 2m} , \\
\tilde{A}_3 =  \left(
\begin{array}{c c}
0 &  A_3(t,x, \eta, \xi) \\
- A_3(t,x, \eta, \xi) & 0 
\end{array}
\right) \in \R^{2m \times 2m}
\end{eqnarray}

We present the different convergent time-steps results
for $|u| = \sqrt{(\eta^2 + \xi^2)}$.

The analytical solution is

We apply the operator splitting schemes as follows:
\begin{eqnarray}
&& \left(
\begin{array}{c}
\eta^{n+1} \\
\xi^{n+1}
\end{array} \right) \nonumber \\
&&  = \exp( (\tilde{A}_1 + \tilde{A}_2(\eta^{n}, \xi^{n}) - \frac{1}{2} (\tilde{A}_3^t \tilde{A}_3)) \Delta t + \tilde{A}_3 \Delta W_t )  \left(
\begin{array}{c}
\eta^n \\
\xi^n
\end{array} \right) , 1 \le n \le N , \nonumber \\
&& \eta^0 = (\exp(\sin(2 x_1)), \ldots, \exp(\sin(2 x_M)))^t , \xi^0 = (0, \ldots , 0)^t
\end{eqnarray}
where $\Delta t = t^{n+1} - t^n$, the random variable $W_t$ is based on a Wiener process with $\Delta W_t = W_{t^{n+1}} - W_{t^n} = \sqrt{\Delta t} X$, and
$X$ is a Gaussian distributed random variable with $E(X) = 0$ and 
$Var(X) = 1$. This means we have $\Delta W_t = rand \sqrt{\Delta t}$.

The solution is given by $|u|$ and the errors are
\begin{eqnarray}
\label{ana_1}
&& || u_{refer}(x,t) - u_{i,j}(t)||_{L_2(0,T)} = \Delta x \; \sum_{n=1}^N (u_{refer}(x_i, t) - u(x_i,t))^2 , \\
&& E(||  u_{refer}(x, t) -  u_{i,j}(x, t)||) = \frac{1}{N} \; \sum_{n=1}^N | u_{refer}(x_i, t) - u(x_i,t) | ,
\end{eqnarray}

In the following figures, we present the results for the error
of the iterative splitting schemes, see Fig.~\ref{error_1}.
\begin{figure}[ht]
\begin{center}  
\includegraphics[width=5.0cm,angle=-0]{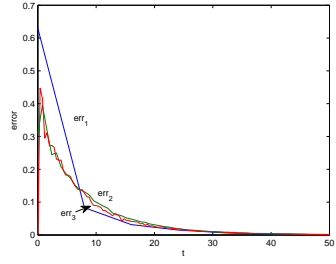}
\end{center}
\caption{\label{error_1} The $L_2$-errors of the iterative
splitting scheme}
\end{figure}

In the following figures, we present the results for the 
different splitting schemes, see Fig. \ref{experiment_2_1}.
\begin{figure}[ht]
\begin{center}  
\includegraphics[width=5.0cm,angle=-0]{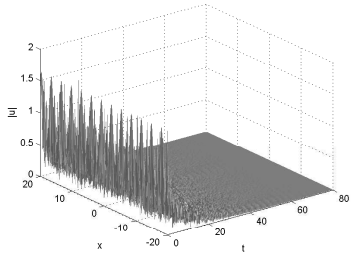}
\end{center}
\caption{\label{experiment_2_1} The results of the A--B splitting with $\Delta t = 0.005, \Delta x = 0.005$.}
\end{figure}

\begin{remark}
With more iterative steps, we see an improvement in the numerical results.
With two to three iterative steps, we obtain nearly the analytical solution.
Here, we could see the almost asymptotic behaviour of the scheme.
\end{remark}

\subsection{Deterministic Schroedinger equation: Perturbations}

We consider the following equation:

\begin{eqnarray}
&& i \hbar \frac{\partial u}{\partial t} = H u , \\
&& H u = - \frac{1}{2} \frac{\partial^2}{\partial x^2} u + \left( \frac{1}{1 + \sin^2(x)} + \lambda |u|^2 \right) u ,
\end{eqnarray}
where $\lambda = 30$, $u_0 = \exp(\sin(2x))$.

We employ the following transformation and change of variables:

$u = \eta + i \xi$ \\

\begin{eqnarray}
&& \left(
\begin{array}{c}
\dot{\eta} \\
\dot{\xi}
\end{array} \right) = 
\left(
\begin{array}{c c}
0 &  A(t,x, \eta, \xi) \\
- A(t,x, \eta, \xi) & 0 
\end{array}
\right)
\left(
\begin{array}{c}
\eta \\
\xi
\end{array} \right)
\\
&& A(t,x, \eta, \xi) = A_1(t,t) + A_2(t,x, \eta, \xi) , \\
&& A_1(t,x) = - \frac{1}{2} \frac{1}{\Delta x^2} [1 \; -2 \; 1] , \\
&& A_2(t,x, \eta, \xi) = \left( \frac{1}{1 + \sin^2(x)} + \lambda (\eta^2 + \xi^2) \right) \;  [0 \; 1 \; 0 ].
\end{eqnarray}

The underlying discretised matrices for the splitting schemes are given as:
\begin{eqnarray}
&& \tilde{A}_1 = 
\left(
\begin{array}{c c}
0 &  A_1(t,x) \\
- A_1(t,x) & 0 
\end{array}
\right) ,\\
&& \tilde{A}_2(t, x, \eta(t^{n-1},x), \xi(t^{n-1}),x) \\
&& = 
\left(
\begin{array}{c c}
0 &  A_2(t,x, \eta(t^{n-1},x), \xi(t^{n-1}),x) \\
- A_2(t,x, \eta(t^{n-1},x), \xi(t^{n-1}),x) & 0 
\end{array}
\right) . \nonumber
\end{eqnarray}

In the next list of schemes we discuss different splitting scheme.
The first splitting scheme is known as an A-B splitting or Lie-Trotter splitting scheme, see \cite{trotter1959}, while we apply multiplicative the different separated operators. The second splitting scheme is known as an iterative splitting scheme,
see \cite{geiser_2010}. Such a scheme apply iteratively the separated operators based on 
a fix-point approximation, see \cite{gei_2011}.

We will employ the following splitting schemes:

\begin{itemize}
\item A--B splitting
\begin{eqnarray}
u_n = \exp(t \tilde{A}_1) \exp(t \tilde{A}_2)  u_{n-1} , 1 \le n \le N ,
\end{eqnarray}
\item Strang splitting scheme
\begin{eqnarray}
u_n = \exp(t/2 \tilde{A}_1) \exp(t \tilde{A}_2)  \exp(t/2 \tilde{A}_1)  u_{n-1} , 1 \le n \le N ,
\end{eqnarray}
\item Weighted Iterative Splitting 1:
We define a relaxed iterative splitting method based on the critical value $\lambda$:
\begin{eqnarray}\label{iter1}
\dot{u_i} &= & (\tilde{A}_1 + (1 - \omega) \tilde{A}_2 ) u_i +  \omega \tilde{A}_2 u_{i-1} , \\
 &= & \hat{A}_1 u_i +  \hat{A}_2 u_{i-1} ,
\end{eqnarray}
and $\hat{A}_1 = \tilde{A}_1 + (1 - \omega) \tilde{A}_2$,  $\hat{A}_2 = \omega \tilde{A}_2$ and $\omega = \frac{1}{\lambda}$.

The algorithm is
\begin{eqnarray}\label{iter1}
\dot{u_1} &= & \hat{A}_1 u_1,\\
\label{iter2} \dot{u_2} &= & \hat{A}_1 u_2 + \hat{A}_2
u_1,\\\label{iter3}
\dot{u_3} &= & \hat{A}_1 u_3+ \hat{A}_2 u_2,\\
\dot{u_4} &= & ...
\end{eqnarray}
and is solved as:
\begin{eqnarray}
&& c_1(t) = \exp(\hat{A}_1t) c(t^n), \\
&& c_2(t) = c_1(t) +  c_1(t) \; \int_0^t [\hat{A}_2, \exp(s \hat{A}_1)] ds \; , \nonumber \\
&& c_2(t) = c_1(t) +  c_1(t) \; [\hat{A}_2, \phi_1(t \hat{A}_1)] ,  \\
&& c_3(t) = c_2(t) + c_1(t) \; \int_0^t [\hat{A}_2, \exp(s \hat{A}_1)]  [\hat{A}_2, \phi_1(s \hat{A}_1)]  ds \;  ,  \nonumber \\
&& c_3(t) = c_2(t) + c_1(t) \;\left(  [\hat{A}_2, \exp(t \hat{A}_1)]  [\hat{A}_2, \phi_2(t \hat{A}_1)] \right.  \\
&& \left. +  [\hat{A}_2, \hat{A}_1 \exp(t \hat{A}_1)]  [\hat{A}_2, \phi_3(t \hat{A}_1)] \right) + O(t^3) \; , \nonumber \\
&& \cdots \nonumber
\end{eqnarray}
where the given $\phi_{i}$ is defined as:
\begin{eqnarray}
\phi_{0}(\hat{A}_1 t) =  \exp(\hat{A}_1 t) , \\
\phi_{i}(\hat{A}_1 t) = \int_0^t \phi_{i-1}(\hat{A}_1 s) \; ds , \\
\phi_{i}(\hat{A}_1 t) = \frac{\phi_{i-1}(\hat{A}_1 t) - I \; \frac{t^{i-1}}{(i-1)!}}{\hat{A}_1} .
\end{eqnarray}

\item Weighted Iterative Splitting 2:
We define a relaxed iterative splitting method based on the critical value $\lambda$:
\begin{eqnarray}\label{iter1}
&& \dot{u_i} = \tilde{A}_1 u_i +  \omega \tilde{A}_2 u_{i-1} , \\
&& \mbox{with} \; \; u_i(t^n) = u^{n}, \\
&& \mbox{and} \; u_{0}(t^n) = u^n \; , \; u_{-1} = 0 , \nonumber \\
&& \dot{u}_{i+1} = \omega \;\tilde{A}_1 u_i(t) \; + \; \tilde{A}_2 u_{i+1}(t), \; \\
&& \mbox{with} \; \; u_{i+1}(t^n) = \omega \; u^{n} + (1 - \omega)
\; u_i(t^{n+1}) \; , \nonumber
\end{eqnarray}
where $u^n$ is the known split
approximation at the  time level  $t=t^{n}$. The split
approximation at the time level $t=t^{n+1}$ is defined as
$u^{n+1}=u_{2m+1}(t^{n+1})$. The parameter $\omega \in [0,1]$. For
    $\omega = 0$, we have the sequential splitting
method, and for $\omega = 1$ we have the iterative splitting method.

\end{itemize}

The following figures present the results for the 
different splitting schemes, see Fig.~\ref{experiment_1}.
\begin{figure}[ht]
\begin{center}  
\includegraphics[width=5.0cm,angle=-0]{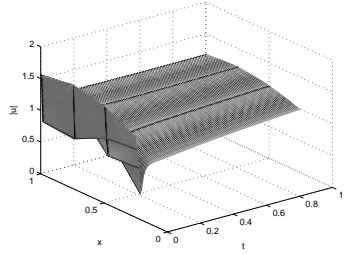}
\end{center}
\caption{\label{experiment_1} Results of the iterative splitting approach.}
\end{figure}

\begin{remark}
Here, we have compared the standard splitting scheme with our iterative
splitting approach. Based on the resolution of the analytical solution,
we obtain the same results as for the standard schemes.
\end{remark}

\subsection{Deterministic nonlinear Schr\"odinger equation}

We consider the equation
\begin{eqnarray}
&& i \frac{\partial u}{\partial t} = H u + \epsilon u \circ \frac{d W}{d t}, \; t > 0, \epsilon > 0, x \in \R , \\
&& H u = \left( \frac{\partial^2}{\partial x^2} + 2 |u|^{2 \sigma} \right) u ,
\end{eqnarray}
with $\sigma = 1.0$ and $\epsilon = 0.0$.

We choose the initial condition:
\begin{eqnarray}
u|_{t=0} = \frac{1}{\sqrt{2}} \sec(\frac{1}{\sqrt{2}}(x -25)) \exp(- i \frac{x}{20})  .
\end{eqnarray}
Then the exact single-soliton solution is
\begin{eqnarray}
u(x,t) = \frac{1}{\sqrt{2}} \sec(\frac{1}{\sqrt{2}}(x - \frac{t}{10} - 25)) \exp(- i (\frac{x}{20} + \frac{199}{400}t))  .
\end{eqnarray}

We employ the following transformation and change of variables:

$u = \eta + i \xi$ \\

\begin{eqnarray}
&& \left(
\begin{array}{c}
\dot{\eta} \\
\dot{\xi}
\end{array} \right) = 
\left(
\begin{array}{c c}
0 &  A(t,x, \eta, \xi) \\
- A(t,x, \eta, \xi) & 0 
\end{array}
\right)
\left(
\begin{array}{c}
\eta \\
\xi
\end{array} \right)
\\
&& A(t,x, \eta, \xi) = A_1(t,x) + A_2(t,x, \eta, \xi) , \\
&& A_1(t,x) = \frac{1}{\Delta x^2} [1 \; -2 \; 1] , \\
&& A_2(t,x, \eta, \xi) = \left( 2 (\eta^2 + \xi^2)^{\sigma} \right) \;  [0 \; 1 \; 0 ].
\end{eqnarray}

The underlying discretised matrices for the splitting schemes are
\begin{eqnarray}
&& \tilde{A}_1 = 
\left(
\begin{array}{c c}
0 &  A_1(t,x) \\
- A_1(t,x) & 0 
\end{array}
\right) ,\\
&& \tilde{A}_2(t, x, \eta(t^{n-1},x), \xi(t^{n-1}),x) \\
&& = 
\left(
\begin{array}{c c}
0 &  A_2(t,x, \eta(t^{n-1},x), \xi(t^{n-1},x)) \\
- A_2(t,x, \eta(t^{n-1},x), \xi(t^{n-1}),x) & 0 
\end{array}
\right) . \nonumber
\end{eqnarray}

We consider the following splitting schemes:

\begin{itemize}
\item A--B splitting
\begin{eqnarray}
u_n = \exp(t \tilde{A}_1) \exp(t \tilde{A}_2)  u_{n-1} , 1 \le n \le N ,
\end{eqnarray}
\item Strang splitting scheme
\begin{eqnarray}
u_n = \exp(t/2 \tilde{A}_1) \exp(t \tilde{A}_2)  \exp(t/2 \tilde{A}_1)  u_{n-1} , 1 \le n \le N ,
\end{eqnarray}
\item Weighted Iterative Splitting 1:
We define a relaxed iterative splitting method based on the critical value $\lambda$:
\begin{eqnarray}\label{iter1}
\dot{u_i} &= & (\tilde{A}_1 + (1 - \omega) \tilde{A}_2 ) u_i +  \omega \tilde{A}_2 u_{i-1} , \\
 &= & \hat{A}_1 u_i +  \hat{A}_2 u_{i-1} ,
\end{eqnarray}
and $\hat{A}_1 = \tilde{A}_1 + (1 - \omega) \tilde{A}_2$,  $\hat{A}_2 = \omega \tilde{A}_2$ and $\omega = \frac{1}{\lambda}$.

The algorithm is
\begin{eqnarray}\label{iter1}
\dot{u_1} &= & \hat{A}_1 u_1,\\
\label{iter2} \dot{u_2} &= & \hat{A}_1 u_2 + \hat{A}_2
u_1,\\\label{iter3}
\dot{u_3} &= & \hat{A}_1 u_3+ \hat{A}_2 u_2,\\
\dot{u_4} &= & ...
\end{eqnarray}
and is solved as
\begin{eqnarray}
&& c_1(t) = \exp(\hat{A}_1t) c(t^n), \\
&& c_2(t) = c_1(t) +  c_1(t) \; \int_0^t [\hat{A}_2, \exp(s \hat{A}_1)] ds \; , \nonumber \\
&& c_2(t) \approx c_1(t) +  c_1(t) \; t [\hat{A}_2, \exp(t \hat{A}_1)] , 
\end{eqnarray}
\end{itemize}

The following figures present the results for the 
different splitting schemes, see Fig.~\ref{experiment_2}.
\begin{figure}[ht]
\begin{center}  
\includegraphics[width=5.0cm,angle=-0]{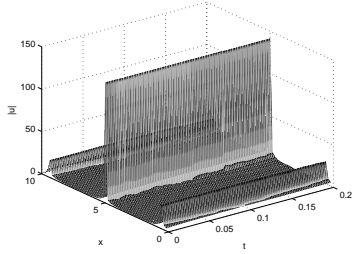}
\end{center}
\caption{\label{experiment_2} Results of the iterative splitting approach.}
\end{figure}

We apply $|u|$ for each solution and obtain the following errors:
\begin{eqnarray}
\label{ana_1}
&& || u_{refer}(x,t) - u_{i,j}(t)||_{L_2(0,T)} = \Delta x \; \sum_{n=1}^N (u_{refer}(x_i, t) - u(x_i,t))^2 , \\
&& E(||  u_{refer}(x, t) -  u_{i,j}(x, t)||) = \frac{1}{N} \; \sum_{n=1}^N | u_{refer}(x_i, t) - u(x_i,t) | ,
\end{eqnarray}

The following figures present the results for the errors
of the iterative splitting schemes, see Fig.~\ref{error_3}.

\begin{figure}[ht]
\begin{center}  
\includegraphics[width=5.0cm,angle=-0]{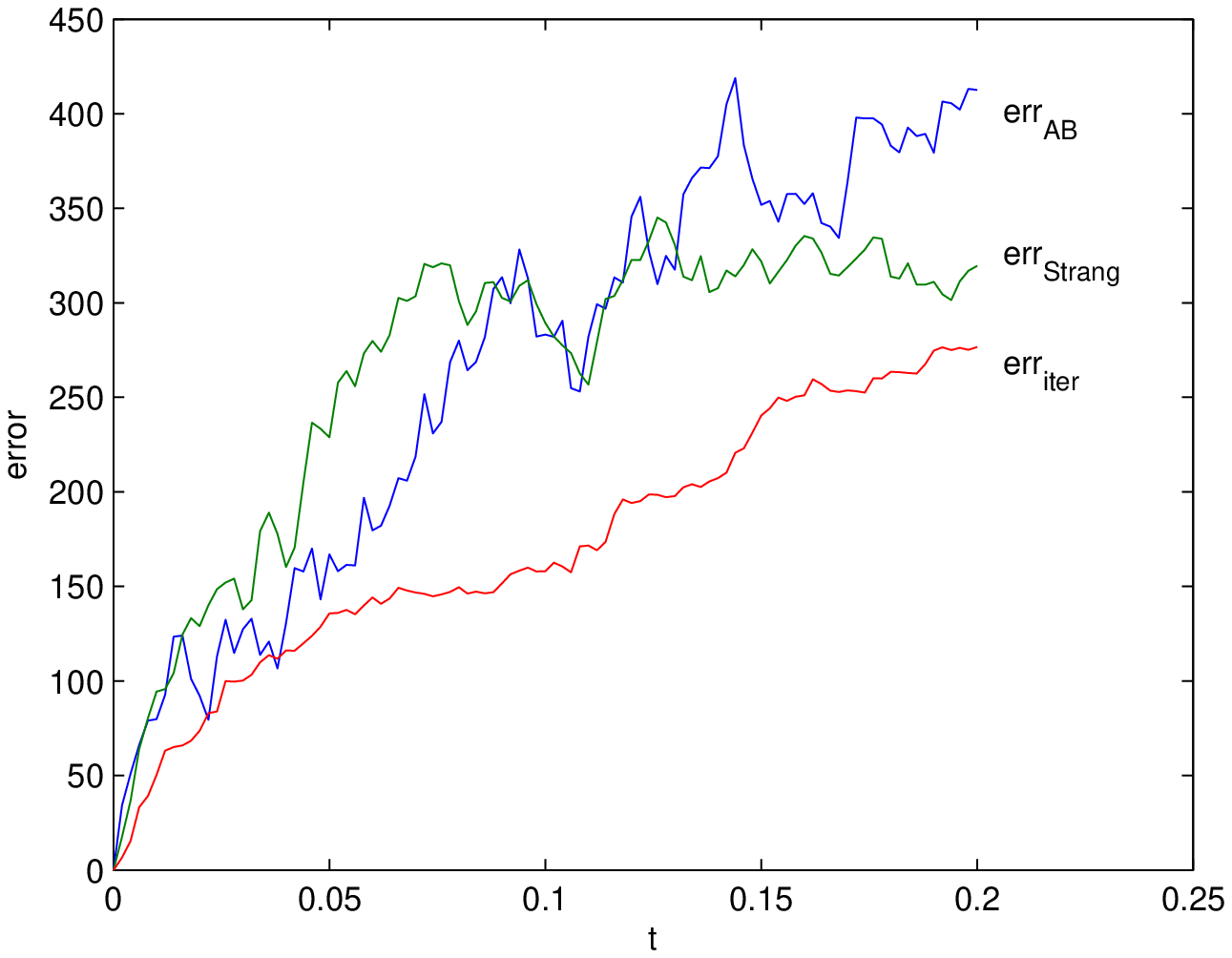}
\includegraphics[width=5.0cm,angle=-0]{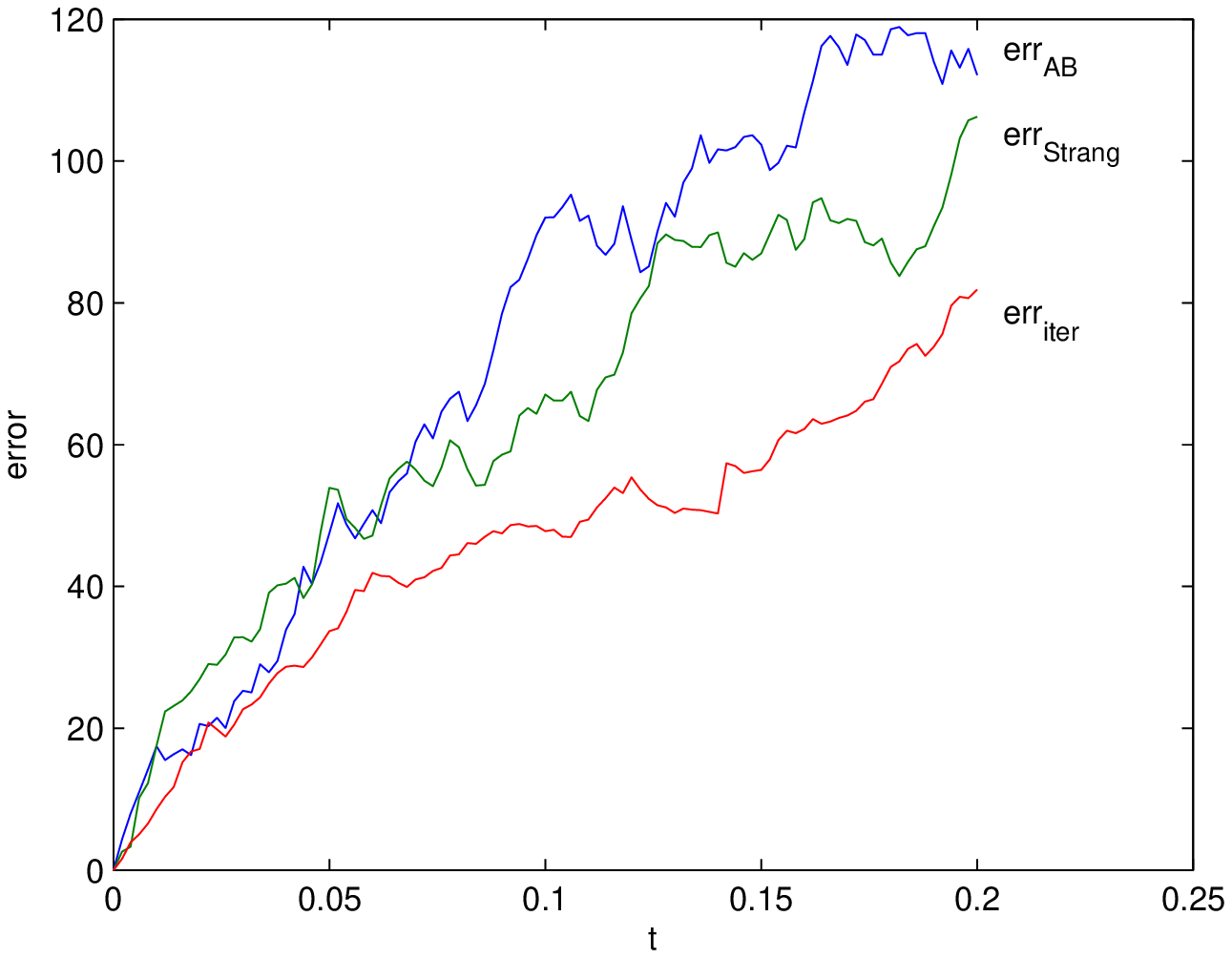} 
\end{center}
\caption{\label{error_3} The $L_2$-errors of the different splitting schemes, where in the left figure, we have $\Delta t = 0.002$ and $\Delta x = 0.01$ and in the right figure, we have $\Delta t = 0.002$ and $\Delta x = 0.02$.}
\end{figure}

\begin{remark}
In both resolution in time and space the iterative splitting method is 
more accurate than the standard A--B and Strang splitting schemes.
Here, we see an improvement based on the successive approximation idea
and obtain a more accurate linearisation than for the standard schemes.
\end{remark}

\subsection{Stochastic nonlinear Schr\"odinger equation}

We consider the equation
\begin{eqnarray}
&& i \frac{\partial u}{\partial t} = H u + \epsilon u \circ \frac{d W}{d t}, \; t > 0, \epsilon > 0, x \in \R , \\
&& H u = \left( \frac{\partial^2}{\partial x^2} + 2 |u|^{2 \sigma} \right) u ,
\end{eqnarray}
with $\sigma = 1.0$ and $\epsilon = 0.0$.

We choose the initial condition
\begin{eqnarray}
u|_{t=0} = \frac{1}{\sqrt{2}} \sec(\frac{1}{\sqrt{2}}(x -25)) \exp(- i \frac{x}{20})  .
\end{eqnarray}
%
%Then the exact single-soliton solution is given as:
%
%\begin{eqnarray}
%u(x,t) = \frac{1}{\sqrt{2}} \sec(\frac{1}{\sqrt{2}}(x - \frac{t}{10} - 25)) \exp(- i (\frac{x}{20} + \frac{199}{400}t))  .
%\end{eqnarray}
%

For the reference solution, we apply a fine resolution Strang splitting.

We employ the following transformation and change of variables:

$u = \eta + i \xi$ \\

\begin{eqnarray}
&& \left(
\begin{array}{c}
\dot{\eta} \\
\dot{\xi}
\end{array} \right) = 
\left(
\begin{array}{c c}
0 &  A(t,x, \eta, \xi) \\
- A(t,x, \eta, \xi) & 0 
\end{array}
\right)
\left(
\begin{array}{c}
\eta \\
\xi
\end{array} \right)
\\
&& A(t,x, \eta, \xi) = A_1(t,x) + A_2(t,x, \eta, \xi) + A_3(t,x) , \\
&& A_1(t,x) = \frac{1}{\Delta x^2} [1 \; -2 \; 1] , \\
&& A_2(t,x, \eta, \xi) = \left( 2 (\eta^2 + \xi^2)^{\sigma} \right) \;  [0 \; 1 \; 0 ] , \\
&& A_3(t,x) = \epsilon \; \Delta W \; [0 \; 1 \; 0] , \\
\end{eqnarray}

The underlying discretised matrices for the splitting schemes are
\begin{eqnarray}
&& \tilde{A}_1 = 
\left(
\begin{array}{c c}
0 &  A_1(t,x) \\
- A_1(t,x) & 0 
\end{array}
\right) ,\\
&& \tilde{A}_2(t, x, \eta(t^{n-1},x), \xi(t^{n-1}),x) \\
&& = 
\left(
\begin{array}{c c}
0 &  A_2(t,x, \eta(t^{n-1},x), \xi(t^{n-1},x)) \\
- A_2(t,x, \eta(t^{n-1},x), \xi(t^{n-1}),x) & 0 
\end{array}
\right) , \nonumber \\
&& \tilde{A}_3 = 
\left(
\begin{array}{c c}
0 &  A_3(t,x) \\
- A_3(t,x) & 0 
\end{array}
\right) ,
\end{eqnarray}
and 
\begin{eqnarray}
&& \tilde{A}_4 = \tilde{A}_1 + \tilde{A}_2 .
\end{eqnarray}

\begin{itemize}
\item A--B splitting

We apply the operator splitting schemes as:
\begin{eqnarray}
&& \left(
\begin{array}{c}
\eta^{n+1} \\
\xi^{n+1}
\end{array} \right) \nonumber \\
&&  = \exp(\Delta t \; \tilde{A}_1) \; \exp(\Delta t \; \tilde{A}_2(\eta^{n}, \xi^{n})) \;  \\
&& \exp(- \frac{1}{2} \Delta t \; (\tilde{A}_3^t \tilde{A}_3) + \tilde{A}_3 \Delta W_t )  \left(
\begin{array}{c}
\eta^n \\
\xi^n
\end{array} \right) , 1 \le n \le N , \nonumber \\
&& \eta^0(x_i) = \frac{1}{\sqrt{2}} \sec(\frac{1}{\sqrt{2}}(x_i -25)) \cos(- \frac{x_i}{20}), i = 1, \ldots, M , \\
&& xi^0(x_i) = \frac{1}{\sqrt{2}} \sec(\frac{1}{\sqrt{2}}(x_i - 25)) \sin(- \frac{x_i}{20}) , i = 1, \ldots, M ,
\end{eqnarray}
where $\Delta t = t^{n+1} - t^n$, the random variable $W_t$ is based on a Wiener process with $\Delta W_t = W_{t^{n+1}} - W_{t^n} = \sqrt{\Delta t} X$, and
$X$ is a Gaussian distributed random variable with $E(X) = 0$ and 
$Var(X) = 1$. This means we have $\Delta W_t = rand \sqrt{\Delta t}$.
\item Iterative splitting scheme:

First iterative step
\begin{eqnarray}
&& X_{1,n}(t) =  \left(
\begin{array}{c}
\eta^{n+1} \\
\xi^{n+1}
\end{array} \right)  = \exp(\Delta t \; \tilde{A}_4) \left(
\begin{array}{c}
\eta^n \\
\xi^n
\end{array} \right) , \nonumber \\
\end{eqnarray}
Second iterative step
\begin{eqnarray}
&&  X_{2,n}(t) = X_{1,n}(t) + X_{1,n}(t) [\tilde{A}_3 , \int_0^t \exp(\tilde{A}_4 s) dW_s] ,  \quad t \in (t^n,t^{n+1}] , \nonumber \\
&& X_{2,n}(t) = X_{1,n}(t) + X_{1,n}(t) [\tilde{A}_3 , C_1(t)] ,  \quad t \in (t^n,t^{n+1}] , \nonumber
 \label{eq:err1}
\end{eqnarray}

The stochastic integral is computed as a Stratonovich integral:
\begin{eqnarray}
\label{int_1_1}
 && C_1(t) = \int_0^{t}  \exp(A s) dW_s \\
&& = \sum_{j=0}^{N-1}  \exp(A(\frac{t_{j} + t_{j+1}}{2})) \;  (W(t_{j+1}) - W(t_j)) , \nonumber \\
&& \Delta t = t /N, t_{j} = \Delta t + t_{j-1}, t_0 = 0 .
\end{eqnarray}
\end{itemize}

%The following figures present the results for the 
%different splitting schemes, see Fig.~\ref{experiment_1_3}.
%
%\begin{figure}[ht]
%\begin{center}  
%\includegraphics[width=5.0cm,angle=-0]{Stochastic_Nonlinear_Schroedinger_AB_delta-t=0.1_delta-x=0.1.eps}
%\includegraphics[width=5.0cm,angle=-0]{Stochastic_Nonlinear_Schroedinger_iter_delta-t=0.1_delta-x=0.1.eps} \\
%\includegraphics[width=5.0cm,angle=-0]{Stochastic_Nonlinear_Schroedinger_AB_delta-t=0.01_delta-x=0.01.eps}
%\includegraphics[width=5.0cm,angle=-0]{Stochastic_Nonlinear_Schroedinger_iter_delta-t0-01_delta-x0-01.eps} 
%\end{center}
%\caption{\label{experiment_1_3}
% Results of the A--B splitting (upper left, $\Delta x = 0.1, \Delta t = 0.1$), results of the iterative splitting (upper right, $\Delta x = 0.1, \Delta t = 0.1$), results of the A--B splitting (lower left $\Delta x = 0.01, \Delta t = 0.01$), results of the iterative splitting (lower right $\Delta x = 0.01, \Delta t = 0.01$)}
%\end{figure}

We apply $|u|$ for each solution and deal with the following errors:
\begin{eqnarray}
\label{ana_1}
&& || u_{refer}(x,t) - u_{i,j}(t)||_{L_2(0,T)} = \Delta x \; \sum_{n=1}^N (u_{refer}(x_i, t) - u(x_i,t))^2 , \\
&& E(||  u_{refer}(x, t) -  u_{i,j}(x, t)||) = \frac{1}{N} \; \sum_{n=1}^N | u_{refer}(x_i, t) - u(x_i,t) | .
\end{eqnarray}

The following figures present the results for the error
of the iterative splitting schemes, see Fig.~\ref{error_4}.
\begin{figure}[ht]
\begin{center}  
\includegraphics[width=5.0cm,angle=-0]{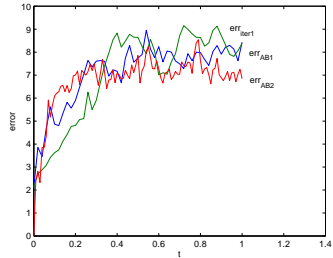} 
\end{center}
\caption{\label{error_4} The $L_2$-errors of the different splitting scheme, where we compare them to the solution obtained from a fine resolution iterative splitting scheme}
\end{figure}

\begin{remark}
In both resolution in time and space the iterative splitting method is 
more accurate than the standard A--B and Strang splitting schemes.
Here, we obtain an improvement based on the successive approximation 
scheme.
\end{remark}

\section{Conclusion}

We discuss the problems of using novel iterative splitting schemes
to solve stochastic nonlinear Schroedinger equations.
We could prove the almost asymptotic symplectic behaviour of the novel scheme.
The improvement with more iterative steps allows resolving the nonlinearity
and obtaining an improved symplectic scheme.
While standard splitting schemes have drawbacks as regards linearisation
and symplecticity, we could derive a combination of both higher accuracy and
conservation of the symplecticity.
In the future, we will take into account larger equation systems for 
a realistic application.

\bibliographystyle{plain}

\begin{thebibliography}{10}

\bibitem{aubry1998} 
A.~Aubry and P.~Chartier.
\newblock{\em Pseudo-symplectic Runge-Kutta methods.}
\newblock BIT, 38:439-461, 1998.

\bibitem{austin1993} 
M.A.~Austin, P.S.~Krishnaprasad, and L.~Wang.
\newblock{\em Almost Poisson integration of rigid body systems.}
\newblock Journal of Comput. Phys., 107, 105-117, 1993.


\bibitem{gei01} 
J.~Geiser.
\newblock Numerical Simulation of a Model for Transport and Reaction of Radionuclides.
\newblock {\em Proceedings of the Large Scale Scientific Computations of Engineering and Environmental Problems, Sozopol, Bulgaria, 2001}.

\bibitem{gei_2009_5}
J.~Geiser.
\newblock{\em Decomposition Methods for Partial Differential Equations: Theory and Applications in Multiphysics Problems.}
\newblock Numerical Analysis and Scientific Computing Series, CRC Press, Chapman \& Hall/CRC, edited by Magoules and Lai, 2009.


\bibitem{geiser_2010} 
J.~Geiser.
\newblock{\em Consistency of Iterative Operator-Splitting Methdod: Theory and Applications.}
\newblock Numerical Methods for Partial Differential Equations, 26(1): 135-158, 2010.


\bibitem{geiser2013}
J.~Geiser.
\newblock Multiscale splitting for stochastic differential equations: applications in particle collisions.
\newblock {\em Journal of Coupled Systems and Multiscale Dynamics}, American Scientific Publishers, Valencia, CA, USA, August 2013. 

\bibitem{gei_2011}
J.~Geiser.
\newblock{\em Iterative Splitting Methods for Differential Equations.}
\newblock Numerical Analysis and Scientific Computing Series, CRC Press, Chapman \& Hall/CRC, edited by Magoules and Lai, 2011.


\bibitem{jentzen2009} 
A.~Jentzen and P.E.~Kloeden.
\newblock The numerical approximation of stochastic partial differential equations.
\newblock {\em Milan J. Math.}, 77(1):205--244, 2009.

\bibitem{jiang2013}
S.~Jiang, L.~Wang and J.~Hong.
\newblock Stochastic Multi-Symplectic Integrator for Stochastic Nonlinear Schroedinger Equation.
\newblock {\em Commun. Comput. Phys.}, 14(2):393--411, 2013.


\bibitem{kloeden1992}
P.E.~Kloeden und E.~Platen.
\newblock{\em The Numerical Solution of Stochastic Differential Equations.}
\newblock Springer-Verlag, Berlin, 1992.



\bibitem{schober1999} 
C.~Schober.
\newblock Symplectic integrators for the Ablowitz--Ladik discrete nonlinear Schroedinger equation.
\newblock {\em Phys. Lett. A}, 259:140--151, 1999.


\bibitem{tan2005}
X.~Tan.
\newblock Almost symplectic Runge--Kutta schemes for Hamiltonian systems.
\newblock {\em Journal of Computational Physics}, 203(1):250--273, 2005.

\bibitem{trotter1959}
H.F.~Trotter. 
\newblock{\em On the product of semi-groups of operators.}
\newblock Proceedings of the American Mathematical Society, 10(4): 545-551, 1959.

\end{thebibliography}

\end{document}